\newcommand{\Y}{\hat{Y}}
\renewcommand{\=}{\doteq}
\newtheorem{thm}{Theorem}[section]
 \newtheorem{cor}[thm]{Corollary}
 \newtheorem{prop}[thm]{Proposition}
\theoremstyle{definition}
 \newtheorem{defn}[thm]{Definition}
\theoremstyle{definition}
 \newtheorem{rem}[thm]{Remark}
\numberwithin{equation}{section}
\begin{document} 
\title{\bf Moufang symmetry VI.\\Reductivity and hidden associativity in\\ Mal'tsev algebras}
\author{Eugen Paal}
\date{}
\maketitle

\thispagestyle{empty}

\begin{abstract}
Reductivity in the  Ma'tsev algebras is inquired. This property relates the Mal'tsev algebras to the general Lie triple systems.
\par\smallskip
{\bf 2000 MSC:} 20N05, 17D10
%\par
%{\bf Keywords:} Mal'tsev algebra, triple system, birepresentation
\end{abstract}

\section{Introduction}
In the present paper we inquire reductivity in the Ma'tsev algebras. This property relates the Mal'tsev algebras to the general Lie triple systems. The paper can be seen as a continuation of \cite{Paal1,Paal2,Paal3,Paal4, Paal5}.

Throughout the paper we denote by $\Gamma$ the tangent algebra of a local analytic Moufang loop $G$.
Multiplication in $\Gamma$ is denoted by $[\cdot,\cdot]$.

\section{Yamaguti brackets}

In the tangent Mal'tsev algebra $\Gamma$ with binary brackets $[\cdot,\cdot]$  the ternary 
\emph{\hbox{Yamaguti} brackets} $[\cdot,\cdot,\cdot]$ are given \cite{Paal4} by
\begin{subequations}
\label{yam_brackets}
\begin{align}
[x,y,z]
&=[x,[y,z]]-[y,[x,z]]+[[x,y],z]\\
&=-[y,x,z]
\end{align}
\end{subequations}
The Sagle-Yamaguti identity reads \cite{Paal4}
\begin{equation}
\label{sagle-yamaguti}
[x,y,[z,w]]=[[x,y,z],w]+[z,[x,y,w]]
\end{equation}
We know from \cite{Paal5} that the infinitesimal translations of a local analytic Moufang loop $G$ have the triple closure property
\begin{equation}
\label{triple_closure}
3[[L_x,L_y],L_z]=L_{[x,y,z]+[[x,y],z]}
\end{equation}

\begin{prop}
The Yamaguti brackets satisfy relation
\begin{equation}
\label{glts2}
[x,y,z]+[y,z,x]+[z,x,y]+[[x,y],z]+[[y,z],x]+[[z,x],y]=0
\end{equation}
\end{prop}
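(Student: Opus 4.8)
The plan is to prove \eqref{glts2} by a direct expansion that uses only the definition \eqref{yam_brackets} of the Yamaguti bracket together with the anticommutativity of the binary product $[\cdot,\cdot]$; in particular, neither the Mal'tsev identity nor the Sagle--Yamaguti identity \eqref{sagle-yamaguti} is needed.

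First I would substitute the defining formula $[x,y,z]=[x,[y,z]]-[y,[x,z]]+[[x,y],z]$ into each of the three cyclically permuted ternary terms $[x,y,z]$, $[y,z,x]$, $[z,x,y]$ occurring on the left-hand side of \eqref{glts2}, producing nine binary-nested terms. Next I would use anticommutativity in the form $[y,[x,z]]=-[y,[z,x]]$ (together with its two cyclic analogues) to flip the three ``middle'' terms $-[y,[x,z]]$, $-[z,[y,x]]$, $-[x,[z,y]]$. After this flip one sees that each of $[x,[y,z]]$, $[y,[z,x]]$, $[z,[x,y]]$ appears exactly twice, so that the sum of the three Yamaguti brackets collapses to $2\bigl([x,[y,z]]+[y,[z,x]]+[z,[x,y]]\bigr)+\bigl([[x,y],z]+[[y,z],x]+[[z,x],y]\bigr)$.

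Now I would add back the remaining cyclic sum $[[x,y],z]+[[y,z],x]+[[z,x],y]$ that appears explicitly in \eqref{glts2}; this merely doubles the second group, so the whole left-hand side equals $2$ times $[x,[y,z]]+[y,[z,x]]+[z,[x,y]]+[[x,y],z]+[[y,z],x]+[[z,x],y]$. Finally I would pair each term $[w,[u,v]]$ with the matching $[[u,v],w]$ and invoke anticommutativity once more: $[x,[y,z]]+[[y,z],x]=0$, and likewise for the other two pairs. Hence the bracketed expression vanishes and \eqref{glts2} follows.

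The argument is entirely routine bookkeeping, so I do not expect a genuine obstacle; the only point requiring care is keeping track of signs when applying anticommutativity to the nested brackets. It is perhaps worth recording explicitly that \eqref{glts2} is thereby a formal identity holding in any anticommutative algebra with the Yamaguti bracket defined by \eqref{yam_brackets}, and is not a feature peculiar to Mal'tsev algebras.
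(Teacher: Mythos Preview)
Your argument is correct: expanding the three cyclic Yamaguti brackets via \eqref{yam_brackets}, flipping the middle terms with anticommutativity, and then cancelling $[w,[u,v]]+[[u,v],w]$ in pairs does yield \eqref{glts2}, and indeed uses nothing beyond anticommutativity of $[\cdot,\cdot]$.

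This is, however, a genuinely different route from the paper's. The paper does not unpack the ternary brackets at all; instead it invokes the triple closure property \eqref{triple_closure}, $3[[L_x,L_y],L_z]=L_{[x,y,z]+[[x,y],z]}$, and sums it cyclically. The operator Jacobi identity $[[L_x,L_y],L_z]+[[L_y,L_z],L_x]+[[L_z,L_x],L_y]=0$ then forces $L$ of the cyclic sum in \eqref{glts2} to vanish, hence the sum itself. Your approach is more elementary and, as you observe, shows that \eqref{glts2} is a formal consequence of anticommutativity alone, valid in any anticommutative algebra with the Yamaguti bracket defined by \eqref{yam_brackets}. The paper's approach, by contrast, ties the identity to the operator calculus of the Moufang loop and fits the narrative in which \eqref{glts2}, \eqref{glts3}, and the later hidden-associativity relations are all read off from Jacobi identities among $L$'s and $Y$'s; it is less self-contained but keeps the proofs uniform in style.
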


\begin{proof}
Use the Jacobi identity
\begin{equation*}
[[L_x,L_y],L_z]+[[L_y,L_z],L_x]+[[L_z,L_x],L_y]=0
\end{equation*}
with the triple closure property (\ref{triple_closure}).
\end{proof}

We know from \cite{Paal4} that the Yamagutian $Y$ of $G$ satisfy the reductivity relation
\begin{equation}
\label{red_L}
6[Y(x;y),L_z]=L_{[x,y,z]}
\end{equation}
and the constraint
\begin{equation}
\label{yam_constraint}
Y([x;y],z)+Y([y;z],x)+Y([z;x],y)=0
\end{equation}

\begin{prop}
The Yamaguti brackets satisfy relation
\begin{equation}
\label{glts3}
[[x,y],z,u]+[[y,z],x,u]+[[z,x],y,u]=0
\end{equation}
\end{prop}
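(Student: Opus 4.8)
The plan is to derive (\ref{glts3}) directly from the reductivity relation (\ref{red_L}) together with the Yamaguti constraint (\ref{yam_constraint}), exploiting that the infinitesimal translations of $G$ form a Lie algebra in which the linear map $z\mapsto L_z$ faithfully represents $\Gamma$ (so that $L_a=0$ forces $a=0$).

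First I would specialise (\ref{red_L}). Since $Y(\cdot\,;\cdot)$ is bilinear, replacing the first argument by the commutator $[x,y]$ and relabelling the remaining variables gives
\[
6\bigl[Y([x,y];z),L_u\bigr]=L_{[[x,y],z,u]},
\]
and likewise for the two cyclic permutations of $(x,y,z)$ with $u$ held fixed. Adding the three identities and using that $[\,\cdot\,,L_u]$ is linear in its first slot, the left-hand sides combine into
\[
6\bigl[\,Y([x,y];z)+Y([y,z];x)+Y([z,x];y)\,,\,L_u\,\bigr],
\]
which vanishes by the constraint (\ref{yam_constraint}). Hence
\[
L_{[[x,y],z,u]+[[y,z],x,u]+[[z,x],y,u]}=0,
\]
and injectivity of $z\mapsto L_z$ yields (\ref{glts3}).

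The only step that needs genuine care is the final one — the passage from $L_a=0$ to $a=0$, i.e.\ the faithfulness of the infinitesimal left translations — which should be stated explicitly or cited from the earlier papers of the series; everything else is a one-line substitution. If one prefers to avoid invoking faithfulness, an alternative would be to expand each Yamaguti bracket $[[x,y],z,u]$ via the definition (\ref{yam_brackets}) and cancel the resulting terms using the Jacobi identity for the $L$'s (as in the proof of (\ref{glts2})) together with the Sagle--Yamaguti derivation property (\ref{sagle-yamaguti}); but the reductivity route above is by far the shortest and is the one I would carry out.
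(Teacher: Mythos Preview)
Your argument is correct and follows exactly the paper's own proof: apply the reductivity relation (\ref{red_L}) with the first argument specialised to a commutator, sum cyclically, invoke the constraint (\ref{yam_constraint}), and read off the identity from $L_{\cdots}=0$. The only difference is that you make explicit the faithfulness of $z\mapsto L_z$, which the paper uses tacitly.
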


\begin{proof}
By using (\ref{red_L}) and (\ref{yam_constraint}) calculate
\begin{align*}
0
&=6[Y([x;y],z)+Y([y;z],x)+Y([z;x],y),L_u]\\
&=6[Y([x;y],z),L_u]+6[Y([y;z],x),L_u]+6[Y([z;x],y),L_u]\\
&=L_{[[x,y],z,u]}+L_{[[y,z],x,u]}+L_{[[z,x],y,u]}\\
&=L_{[[x,y],z,u]+[[y,z],x,u]+[[z,x],y,u]}
\end{align*}
which yields the desired relation.
\end{proof}

\section{Yamagutian in Mal'tsev algebra}

Define the left translations $l^+_z$ and the Yamagutian $\Y$ in $\Gamma$ by
\begin{align}
\label{yam_tan1}
l^+_xy&=[x,y]\\
6\Y(x;y)z&=[x,y,z]
\end{align}
\begin{prop}
We have
\begin{equation}
\label{l+}
6\Y(x;y)=[l^+_x,l^+_y]+l^+_{[x,y]}
\end{equation}
\end{prop}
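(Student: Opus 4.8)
The plan is to verify the operator identity \eqref{l+} by evaluating both sides on an arbitrary element $z\in\Gamma$ and reducing everything to the defining formula \eqref{yam_brackets} for the Yamaguti brackets. This is essentially a bookkeeping argument: the left translation $l^+$ and the Yamagutian $\Y$ have been set up in \eqref{yam_tan1} precisely so that the ternary bracket factors through the commutator of left translations.

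Concretely, I would first compute the commutator term. Using $l^+_xy=[x,y]$ twice,
\begin{equation*}
[l^+_x,l^+_y]z=l^+_x(l^+_yz)-l^+_y(l^+_xz)=l^+_x[y,z]-l^+_y[x,z]=[x,[y,z]]-[y,[x,z]].
\end{equation*}
Next I would add the remaining term $l^+_{[x,y]}z=[[x,y],z]$, obtaining
\begin{equation*}
\bigl([l^+_x,l^+_y]+l^+_{[x,y]}\bigr)z=[x,[y,z]]-[y,[x,z]]+[[x,y],z].
\end{equation*}
By the definition \eqref{yam_brackets} of the Yamaguti brackets the right-hand side is exactly $[x,y,z]$, and by \eqref{yam_tan1} this equals $6\Y(x;y)z$. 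Since $z$ is arbitrary, the operator identity \eqref{l+} follows.

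I do not expect a genuine obstacle here; the only point that needs a moment's care is that no associativity or Jacobi identity is invoked — the identity is a formal consequence of the two definitions in \eqref{yam_tan1} together with \eqref{yam_brackets}. If one wishes to be scrupulous, one may remark that linearity of $l^+$ in its subscript (clear from $l^+_xy=[x,y]$ and bilinearity of the bracket) is what legitimises writing $l^+_{[x,y]}$ and treating it as an operator on the same footing as $[l^+_x,l^+_y]$.
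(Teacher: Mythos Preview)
Your proof is correct and follows exactly the approach indicated in the paper, which simply says ``Use (\ref{yam_brackets})''; you have merely spelled out the one-line computation that this reference points to. No changes are needed.
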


\begin{proof}
Use (\ref{yam_brackets})
\end{proof}

\begin{rem}
In terms of K.~Yamaguti \cite{Yam63} one can say that the left translations  $l^+_x$ ($x\in\Gamma$) realize a \emph{weak representation} of the tangent Mal'tsev algebra $\Gamma$ of $G$.
\end{rem}

\begin{prop}
The Yamagutian $\Y$ obeys relations
\begin{gather*}
%\label{yam_tan3}
\Y(x;y)=-\Y(y,x)\\
\Y([x,y];z)+\Y([y,z];x)+\Y([z,x];y)=0
\end{gather*}
\end{prop}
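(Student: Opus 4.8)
The plan is to obtain both identities directly from the defining relation $6\Y(x;y)z=[x,y,z]$ together with the properties of the Yamaguti brackets collected in Section~2; no new computation is really needed, since the substantive input has already been isolated in \eqref{glts3}.

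First I would treat the antisymmetry $\Y(x;y)=-\Y(y;x)$. For every $z\in\Gamma$ the skew-symmetry \eqref{yam_brackets} of the Yamaguti brackets gives
\[
6\,\Y(x;y)z=[x,y,z]=-[y,x,z]=-6\,\Y(y;x)z .
\]
Since this holds for all $z$ and $\Gamma$ is a vector space, the linear operators $\Y(x;y)$ and $-\Y(y;x)$ coincide, which is the first assertion. The point to be careful about here — and the same remark applies below — is precisely that equality of the ternary brackets for every value of the free slot is what upgrades to equality of the associated operators $\Y(\cdot;\cdot)$.

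Next I would prove the cyclic relation by evaluating its left-hand side on an arbitrary $u\in\Gamma$. Applying the definition of the Yamagutian three times yields
\[
6\bigl(\Y([x,y];z)+\Y([y,z];x)+\Y([z,x];y)\bigr)u
=[[x,y],z,u]+[[y,z],x,u]+[[z,x],y,u],
\]
and the right-hand side vanishes by \eqref{glts3}. As $u$ is arbitrary, the operator on the left is the zero operator, giving $\Y([x,y];z)+\Y([y,z];x)+\Y([z,x];y)=0$.

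I do not expect a genuine obstacle in this proposition: it is essentially an operator-theoretic restatement of \eqref{yam_brackets} and \eqref{glts3}. The real work lies upstream, in establishing \eqref{glts3}, which in turn rests on the reductivity relation \eqref{red_L} and the constraint \eqref{yam_constraint}; once those are in hand the argument above is just bookkeeping with the definition of $\Y$.
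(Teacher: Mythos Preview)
Your argument is correct and matches the paper's own proof, which simply cites (\ref{yam_brackets}b) for the antisymmetry and (\ref{glts3}) for the cyclic relation. You have merely spelled out the ``evaluate on an arbitrary element and use the definition of $\Y$'' step that the paper leaves implicit.
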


\begin{proof}
Use (\ref{yam_brackets}b) and (\ref{glts3}).
\end{proof}

\section{Reductivity in Mal'tsev algebra}

\begin{thm}
Yamagutian $\Y$ is a derivation of $\Gamma$:
\begin{equation}
\label{yam_der}
\Y(x;y)[z,w]=[\Y(x;y)z,w]+[z,\Y(x;y)w]
\end{equation}
\end{thm}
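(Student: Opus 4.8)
The plan is to observe that, once the normalisation is unwound, the asserted identity (\ref{yam_der}) is literally the Sagle--Yamaguti identity (\ref{sagle-yamaguti}), so that no genuinely new computation is required. Recall that the Yamagutian acts on $\Gamma$ by $6\Y(x;y)v=[x,y,v]$ for every $v\in\Gamma$, and that $[\cdot,\cdot]$ is bilinear while $\Y(x;y)$ is linear.

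Concretely, I would first apply the defining formula with $v=[z,w]$: multiplying the left-hand side of (\ref{yam_der}) by $6$ yields $6\,\Y(x;y)[z,w]=[x,y,[z,w]]$. Then, multiplying the right-hand side by $6$ and moving the scalar through the bilinear bracket gives $6[\Y(x;y)z,w]=[[x,y,z],w]$ and $6[z,\Y(x;y)w]=[z,[x,y,w]]$. Hence (\ref{yam_der}) is equivalent, for all $x,y,z,w\in\Gamma$, to $[x,y,[z,w]]=[[x,y,z],w]+[z,[x,y,w]]$, and this is precisely (\ref{sagle-yamaguti}). In the terminology of \cite{Yam63} this is just the statement that the inner ternary derivation of a general Lie triple system acts as a derivation of the binary product; here that fact is already in hand.

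If one preferred an argument staying on the side of the loop $G$, I would instead start from the reductivity relation (\ref{red_L}) with $z$ replaced by $[z,w]$, obtaining $6[Y(x;y),L_{[z,w]}]=L_{[x,y,[z,w]]}$, and then decompose $L_{[z,w]}$ via (\ref{l+}) and (\ref{triple_closure}) and use the Leibniz rule $[Y(x;y),AB]=[Y(x;y),A]B+A[Y(x;y),B]$ for operator commutators together with (\ref{red_L}) applied to $z$ and to $w$ separately; collecting the coefficients of the $L_{(\cdot)}$ terms reproduces (\ref{sagle-yamaguti}). The only point demanding care in either approach is the bookkeeping of the numerical factors, since the binary bracket, the Yamaguti bracket and the Yamagutian carry the weights $1$, $1$ and $1/6$; with those tracked consistently the derivation property drops out immediately from (\ref{sagle-yamaguti}).
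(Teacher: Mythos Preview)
Your proposal is correct and is precisely the paper's own argument: the paper's proof consists of the single line ``Use the Sagle--Yamaguti identity (\ref{sagle-yamaguti}),'' and your first paragraph simply makes that one-line reduction explicit by unwinding $6\Y(x;y)v=[x,y,v]$. The alternative loop-side argument you sketch is unnecessary here (and is closer in spirit to what the paper does later for the reductivity and hidden-associativity theorems), but it is not wrong.
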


\begin{proof}
Use the Sagle-Yamaguti identity (\ref{sagle-yamaguti}).
\end{proof}

\begin{thm}
Left translations of $\Gamma$ satisfy the reductivity relations
\begin{equation}
\label{red_gamma}
6[\Y(x;y)],l^+_z]=l^+_{[x,y,z]}
\end{equation}
\end{thm}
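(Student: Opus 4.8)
The plan is to verify the operator identity (\ref{red_gamma}) by applying both sides to an arbitrary element $w\in\Gamma$ and reducing everything to the ternary Yamaguti bracket. First I would expand the commutator on the left-hand side: since $l^+_z$ acts by $l^+_zw=[z,w]$, one gets
\[
[\Y(x;y),l^+_z]\,w=\Y(x;y)[z,w]-[z,\Y(x;y)w].
\]
The key observation is that the first term on the right is controlled by the preceding theorem, i.e.\ the derivation property (\ref{yam_der}), which gives $\Y(x;y)[z,w]=[\Y(x;y)z,w]+[z,\Y(x;y)w]$.

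Substituting this back, the two copies of $[z,\Y(x;y)w]$ cancel and there remains
\[
[\Y(x;y),l^+_z]\,w=[\Y(x;y)z,w].
\]
Multiplying by $6$ and invoking the defining relation $6\Y(x;y)z=[x,y,z]$ from (\ref{yam_tan1}) together with bilinearity of the binary bracket yields $6[\Y(x;y),l^+_z]\,w=[[x,y,z],w]=l^+_{[x,y,z]}\,w$. Since $w$ was arbitrary, this is exactly the claimed identity (\ref{red_gamma}), which is the $\Gamma$-level analogue of the reductivity relation (\ref{red_L}) for the infinitesimal left translations.

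I do not anticipate a genuine obstacle here: all the content is carried by the derivation property (\ref{yam_der}), and the rest is just unwinding definitions. The only points meriting care are keeping track of the factor $6$ and reading (\ref{yam_der}) as an identity of operators evaluated on $[z,w]$, rather than trying to commute $\Y(x;y)$ past $l^+_z$ in some abstract way. One could alternatively try to obtain (\ref{red_gamma}) directly from (\ref{l+}), expanding $[[l^+_x,l^+_y]+l^+_{[x,y]},l^+_z]$ by means of the operator Jacobi identity together with the Mal'tsev relation; but that route effectively re-proves the derivation property en route, so factoring through (\ref{yam_der}) is the cleaner path.
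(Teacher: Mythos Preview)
Your proposal is correct and follows essentially the same route as the paper: both arguments rewrite the derivation property (\ref{yam_der}) in operator form as $\Y(x;y)\,l^+_z w=l^+_{\Y(x;y)z}w+l^+_z\,\Y(x;y)w$, read off $[\Y(x;y),l^+_z]=l^+_{\Y(x;y)z}$, and then insert $6\Y(x;y)z=[x,y,z]$. Your write-up is merely more explicit about the cancellation and the factor~$6$.
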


\begin{proof}
Rewrite (\ref{yam_der}) as
\begin{equation*}
\Y(x;y)l^+_z w=l^+_{\Y(x;y)z}w+l^+_z\Y(x;y)w
\end{equation*}
Thus, since $w$ in $\Gamma$ is arbitray, we have 
\begin{equation*}
[\Y(x;y),l^+_z]=l^+_{\Y(x;y)z}
\end{equation*}
which imply the desired relation (\ref{red_gamma}).
\end{proof}

\section{Hidden associativity in Mal'tsev algebra}

\begin{thm}[hidden associativity]
The Yamagutian $\Y$ of $G$ obey the commutation relations
\begin{equation}
\label{hidden_assoc1}
6[\Y(x;y),\Y(z,w)]=\Y([x,y,x],w)+\Y(z;[x,y,w])
\end{equation}
if  the Sagle-Yamaguti identity (\ref{sagle-yamaguti}) holds.
\end{thm}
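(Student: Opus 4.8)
The plan is to work with linear operators on $\Gamma$ and to lean on the two structural facts already established: the expression (\ref{l+}) of the Yamagutian through left translations, $6\Y(z;w)=[l^+_z,l^+_w]+l^+_{[z,w]}$, and the ``unnormalised'' reductivity identity $[\Y(x;y),l^+_z]=l^+_{\Y(x;y)z}$ that appears inside the proof of (\ref{red_gamma}). Substituting (\ref{l+}) for the second Yamagutian and using bilinearity of the operator commutator, I would first reduce the claim to
\begin{equation*}
6[\Y(x;y),\Y(z;w)]=[\Y(x;y),[l^+_z,l^+_w]]+[\Y(x;y),l^+_{[z,w]}],
\end{equation*}
so that only these two commutators remain to be evaluated.

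For the first commutator I would apply the Jacobi (Leibniz) identity for commutators of linear maps, $[\Y(x;y),[l^+_z,l^+_w]]=[[\Y(x;y),l^+_z],l^+_w]+[l^+_z,[\Y(x;y),l^+_w]]$, and then replace each inner commutator using reductivity, getting $[l^+_{\Y(x;y)z},l^+_w]+[l^+_z,l^+_{\Y(x;y)w}]$. For the second commutator I would again use reductivity to write $[\Y(x;y),l^+_{[z,w]}]=l^+_{\Y(x;y)[z,w]}$, and then invoke the derivation property (\ref{yam_der}), $\Y(x;y)[z,w]=[\Y(x;y)z,w]+[z,\Y(x;y)w]$, together with linearity of $v\mapsto l^+_v$, to split this as $l^+_{[\Y(x;y)z,w]}+l^+_{[z,\Y(x;y)w]}$. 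Regrouping the four resulting terms into the pairs $\bigl([l^+_{\Y(x;y)z},l^+_w]+l^+_{[\Y(x;y)z,w]}\bigr)$ and $\bigl([l^+_z,l^+_{\Y(x;y)w}]+l^+_{[z,\Y(x;y)w]}\bigr)$ and then reading (\ref{l+}) backwards gives
\begin{equation*}
6[\Y(x;y),\Y(z;w)]=6\Y(\Y(x;y)z;w)+6\Y(z;\Y(x;y)w).
\end{equation*}

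Finally I would use that $\Y$ is bilinear in its two arguments, together with the defining relation $6\Y(x;y)z=[x,y,z]$, to rewrite $\Y(\Y(x;y)z;w)=\tfrac16\Y([x,y,z];w)$ and $\Y(z;\Y(x;y)w)=\tfrac16\Y(z;[x,y,w])$; cancelling the common factor $\tfrac16$ yields the asserted commutation relation (\ref{hidden_assoc1}) (with the evident correction of the misprint $[x,y,x]\to[x,y,z]$ in the first slot on the right). The Sagle--Yamaguti identity enters only indirectly, through (\ref{yam_der}) and hence through (\ref{red_gamma}), which is why the statement is phrased conditionally on it. The step most in need of care is the bookkeeping of numerical factors: it is the non-sixfold form $[\Y(x;y),l^+_z]=l^+_{\Y(x;y)z}$ of reductivity that makes the recombination into (\ref{l+}) transparent, and one must be sure that the Jacobi step used is the identity for associative-operator commutators on $\Gamma$, not the Mal'tsev bracket in $\Gamma$ itself.
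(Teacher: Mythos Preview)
Your argument is correct and uses the same ingredients as the paper: the expression (\ref{l+}) of $\Y$ through left translations, the operator Jacobi identity, and reductivity $[\Y(x;y),l^+_z]=l^+_{\Y(x;y)z}$. The only organisational difference is that the paper writes the Jacobi identity in its cyclic three-term form and carries the resulting $l^+$-residuals to the end, where they assemble into $l^+_{[x,y,[z,w]]-[[x,y,z],w]-[z,[x,y,w]]}$ and vanish by the Sagle--Yamaguti identity in its original bracket form; you instead invoke Sagle--Yamaguti midway, in its derivation incarnation (\ref{yam_der}), to split $l^+_{\Y(x;y)[z,w]}$ before recombining via (\ref{l+}). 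Both routes are equivalent and equally short.
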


\begin{proof}
We calculate the Lie bracket $[\Y(x;y),\Y(z,w)]$ from the Jacobi identity
\begin{equation}
\label{jacobi-temp-ad}
[[\Y(x;y),l^+_z],l^+_w]+[[l^+_z,l^+_w],\Y(x;y]+[[l^+_w,\Y(x;y),l^+_z]=0
\end{equation}
and formulae (\ref{l+}). We have
\begin{align*}
6[[\Y(x;y),l^+_z],l^+_w]
&=[l^+_{[x,y,z]},l_w]\\
&=6\Y([x,y,z];w)-l^+_{[[x,y,z],w]}\\
6[[l^+_z,l^+_w],\Y(x;y]
&=36[\Y(z;w),\Y(x,y)]-6[l^+_{[z,w]},\Y(x;y)]\\
&=36[\Y(z;w),\Y(x,y)]-l^+_{[x,y,[z,w]]}\\
6[[l^+_w,\Y(x;y),l^+_z]
&=6\Y(z;[x,y,w])-l^+_{[z,[x,y,w]]}
\end{align*}
Substituting these relations into (\ref{jacobi-temp-ad}) we obtain
\begin{align*}
36[\Y(x;y),\Y(z,w)]-6\Y([x,y,x],w)-6Y(z;[x,y,w])
&=l^+_{[x,y,[z,w]]-[[x,y,z],w]-[z,[x,y,w]]}\\
&=0
\tag*{\qed}
\end{align*}
\renewcommand{\qed}{}
\end{proof}

\begin{rem}
A.~Sagle \cite{Sagle} and K.~Yamaguti proved \cite{Yam62} that the identity (\ref{sagle-yamaguti}) is equivalent to the Mal'tsev identity. In terms of K.~Yamaguti \cite{Yam63} one can say that the Yamagutian $\Y$ is a \emph{generalized representation} of the tangent Mal'tsev algebra $\Gamma$ of $G$.
\end{rem}

\begin{thm}[hidden associativity]
The hidden associtivity property (\ref{hidden_assoc1}) is equivalent to relations
\begin{equation}
\label{hidden_assoc2}
[x,y,[z,w,v]]=[[x,y,z],w,v]+[z,[x,y,w],v]+[z,w,[x,y,v]]
\end{equation}
\end{thm}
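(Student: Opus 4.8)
The plan is to evaluate both sides of the operator identity~(\ref{hidden_assoc1}) on an arbitrary element $v\in\Gamma$ and to read off the resulting ternary-bracket identity, which I expect to be precisely~(\ref{hidden_assoc2}). The key observation making the equivalence work in both directions is that, by the normalization $6\Y(a;b)c=[a,b,c]$, a Yamagutian $\Y(a;b)$ is completely determined by its action on $\Gamma$; hence an equality between such operators holds if and only if it holds after application to every $v\in\Gamma$.

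First I would expand the left-hand side of~(\ref{hidden_assoc1}) applied to $v$. Writing out the Lie bracket and using $6\Y(z;w)v=[z,w,v]$ and $6\Y(x;y)v=[x,y,v]$ gives
\begin{equation*}
6[\Y(x;y),\Y(z;w)]v=\Y(x;y)[z,w,v]-\Y(z;w)[x,y,v]=\tfrac16\big([x,y,[z,w,v]]-[z,w,[x,y,v]]\big).
\end{equation*}
Then I would expand the right-hand side in the same way,
\begin{equation*}
\big(\Y([x,y,z];w)+\Y(z;[x,y,w])\big)v=\tfrac16\big([[x,y,z],w,v]+[z,[x,y,w],v]\big),
\end{equation*}
equate the two, and clear the common factor $\tfrac16$, obtaining
\begin{equation*}
[x,y,[z,w,v]]-[z,w,[x,y,v]]=[[x,y,z],w,v]+[z,[x,y,w],v],
\end{equation*}
which is just a transposition of two terms away from~(\ref{hidden_assoc2}).

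For the converse I would note that this computation is entirely reversible: if~(\ref{hidden_assoc2}) holds for all $x,y,z,w,v\in\Gamma$, then the operators $6[\Y(x;y),\Y(z;w)]$ and $\Y([x,y,z];w)+\Y(z;[x,y,w])$ agree on every $v$, hence coincide, so~(\ref{hidden_assoc1}) holds. I do not expect a genuine obstacle here; the only points that require attention are the consistent bookkeeping of the numerical factor $6$ coming from the normalization of $\Y$, and the (essentially definitional) remark that equality of Yamagutian-valued expressions is equivalent to pointwise equality on $\Gamma$, so that the asserted equivalence is a true ``if and only if'' rather than a single implication. One could alternatively phrase the whole argument directly in terms of the ternary brackets, but the evaluation-on-$v$ route seems cleanest.
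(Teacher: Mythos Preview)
Your argument is correct and coincides with the paper's first proof (Proof~I), which likewise evaluates both sides of~(\ref{hidden_assoc1}) on an arbitrary $v$ using the normalization $6\Y(a;b)c=[a,b,c]$ and reads off~(\ref{hidden_assoc2}), noting that the steps are reversible. The paper also records a second proof (Proof~II) via the Jacobi identity for $Y(x;y)$, $Y(z;w)$, and $L_v$ combined with the reductivity relation~(\ref{red_L}), but your direct evaluation is the same as the primary argument given.
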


\begin{proof}{I.}
It is enough to note that
\begin{align*}
36\Y(x;y)\Y(z;w)v&=[[x,y,z],w,v]\\
36\Y(z;w)\Y(x;y)v&=[z,w,[x,y,v]]\\
6\Y([x,y,z];w)v&=[[x,y,z],w,v]\\
6\Y(x;[x,y,w])v&=[z,[x,y,w],v]
\end{align*}
which imply the desired relation.
\end{proof}

\begin{proof}{II.}
Use the Jacobi identity
\begin{equation*}
[[Y(x;y),Y(z;w)],L_v]+[[Y(z;w),L_v],Y(x;y)]+[[L_v,Y(x;y],)Y(z;w)]=0
\end{equation*}
Note that
\begin{align*}
36[[Y(x;y),Y(z;w)],L_v]
&=6[Y([x,y,z];w)+Y(z;[x,y,w]),L_v]\\
&=L_{[[x,y,z],w,v]+[z,[x,y,w],v]}\\
36[[Y(z;w),L_v],Y(x;y)]
&=6[L_{[z,w,v]},Y(x;y)]\\
&=-L_{[x,y,[z,w,v]]}\\
36[[L_v,Y(x;y],)Y(z;w)]
&=L_{[z,w,[x,y,v]]}
\end{align*}
Adding the latter we obtain
\begin{align*}
0
&=36[Y(x;y),Y(z;w)],L_v]+[[Y(z;w),L_v],Y(x;y)]+[[L_v,Y(x;y],)Y(z;w)]\\
&=L_{-[x,y,[z,w,v]]+[[x,y,z],w,v]+[z,[x,y,w],v]+[z,w,[x,y,v]]}
\end{align*}
which yields the desired relations.
\end{proof}

\begin{cor}
Relation (\ref{hidden_assoc2}) means that the Yamagutian $\Y$ is a derivation of the Yamaguti brackets:
\begin{equation*}
\Y(x,y)[z,w,v]=[\Y(x,y)z,w,v]+[z,\Y(x,y)w,v]+[z,w,\Y(x,y)v]
\end{equation*}
\end{cor}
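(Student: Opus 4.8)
The plan is to read the corollary as a mere restatement of the preceding theorem once the notation is unwound. Recall that the Yamagutian is defined in \eqref{yam_tan1} by $6\Y(x;y)z=[x,y,z]$, so $\Y(x,y)$ is the linear operator on $\Gamma$ sending $v\mapsto\frac{1}{6}[x,y,v]$. The derivation identity to be proved,
\begin{equation*}
\Y(x,y)[z,w,v]=[\Y(x,y)z,w,v]+[z,\Y(x,y)w,v]+[z,w,\Y(x,y)v],
\end{equation*}
when each occurrence of $\Y(x,y)$ acting on an argument is replaced by $\frac16$ of the appropriate Yamaguti bracket, becomes
\begin{equation*}
\tfrac16[x,y,[z,w,v]]=\tfrac16[[x,y,z],w,v]+\tfrac16[z,[x,y,w],v]+\tfrac16[z,w,[x,y,v]].
\end{equation*}
Multiplying through by $6$ is exactly relation \eqref{hidden_assoc2}, which is the content of the second hidden-associativity theorem.

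So the proof is short: first state that, by the definition \eqref{yam_tan1} of $\Y$, applying the claimed identity to an arbitrary element and multiplying by $6$ is literally \eqref{hidden_assoc2}; then invoke that theorem (available in the excerpt) to conclude. One should be slightly careful that the operator $\Y(x,y)$ really is well-defined as a linear map — i.e.\ that $[x,y,v]$ is linear in $v$ — but this is immediate from the defining formula \eqref{yam_brackets} together with bilinearity of $[\cdot,\cdot]$, so no real work is needed. I would also note the harmless notational slippage between $\Y(x;y)$ and $\Y(x,y)$; the paper uses both for the same object.

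The main (and only) obstacle is purely bookkeeping: making sure the three terms on the right-hand side are matched to the three terms of \eqref{hidden_assoc2} in the correct order and that the single factor of $6$ clears all four instances of $\Y$ uniformly (one on the left, three on the right), which it does because each instance contributes one factor $\frac16$. There is no analytic or algebraic difficulty beyond this. A complete write-up would read:

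\begin{proof}
By the definition \eqref{yam_tan1} of the Yamagutian, for every $v\in\Gamma$ one has $6\,\Y(x,y)v=[x,y,v]$, and this makes $\Y(x,y)$ a well-defined linear operator on $\Gamma$ by bilinearity of $[\cdot,\cdot]$. Applying the claimed identity to an arbitrary $v\in\Gamma$ and replacing each $\Y(x,y)$ acting on its argument by the corresponding Yamaguti bracket divided by $6$, we see that
\begin{equation*}
\Y(x,y)[z,w,v]=[\Y(x,y)z,w,v]+[z,\Y(x,y)w,v]+[z,w,\Y(x,y)v]
\end{equation*}
holds for all $v$ if and only if
\begin{equation*}
[x,y,[z,w,v]]=[[x,y,z],w,v]+[z,[x,y,w],v]+[z,w,[x,y,v]],
\end{equation*}
which is precisely \eqref{hidden_assoc2}. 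This completes the proof.
\end{proof}
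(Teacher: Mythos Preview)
Your proposal is correct and matches the paper's treatment: the paper gives no separate proof of the corollary, presenting it as an immediate restatement of \eqref{hidden_assoc2} under the definition $6\Y(x;y)v=[x,y,v]$, which is exactly what you unwind.
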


\begin{rem}[\cite{Yam63}]
It turns out that every derivation of the Mal'tsev algebra is a derivation of the Yamaguti brackets. The Yamagutian $\Y$ is called the \emph{inner derivation} of the Mal'tsev algebra.
\end{rem}

\section{Recapitulation: general Lie triple systems}

\begin{defn}[general Lie triple system]
A \emph{general Lie triple system} (GLTS) is a vector space $M$  with a binary brackets $[\cdot,\cdot]$ and a ternary brackets $[\cdot,\cdot,\cdot]$ that satisfy the following identities:
\begin{subequations}
\label{glts}
\begin{gather}
[x,y]=-[y,x]\\
[x,y,z]=-[y,x,z]\\
[x,y,z]+[y,z,x]+[z,x,y]+[[x,y],z]+[[y,z],x]+[[z,x],y]=0\\
[[x,y],z,u]+[[y,z],x,u]+[[z,x],y,u]=0\\
[x,y,[z,u]]=[[x,y,z],u]+[z,[x,y,u]]=0\\
[x,y,[z,w,v]]=[[x,y,z],w,v]+[z,[x,y,w],v]+[z,w,[x,y,v]]
\end{gather}
\end{subequations}
\end{defn}

\begin{thm}[see also  \cite{Yam63}]
The tangent algebra $\Gamma\=\{T_e(G),[\cdot,\cdot]\}$ of a local analytic Moufang loop $G$ is  a general Lie triple system with the Yamaguti brackets given by (\ref{yam_brackets}a).
\end{thm}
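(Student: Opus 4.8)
The plan is to verify the six defining identities (\ref{glts}a)--(\ref{glts}f) of a general Lie triple system for $\Gamma$ equipped with its Mal'tsev product $[\cdot,\cdot]$ and the Yamaguti brackets (\ref{yam_brackets}a), drawing on the propositions and theorems already established. Two of them come essentially for free: (\ref{glts}a) is the anticommutativity of the Mal'tsev product, which is part of the definition of a Mal'tsev algebra, and (\ref{glts}b) is precisely the skew-symmetry (\ref{yam_brackets}b), immediate from the defining formula for the Yamaguti brackets.

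Next I would identify (\ref{glts}c) with relation (\ref{glts2}) and (\ref{glts}d) with relation (\ref{glts3}); both were proved above, the first from the Jacobi identity for the left translations $L_x$ together with the triple closure property (\ref{triple_closure}), the second from the reductivity relation (\ref{red_L}) and the constraint (\ref{yam_constraint}). Identity (\ref{glts}e) is exactly the Sagle--Yamaguti identity (\ref{sagle-yamaguti}), which holds in any tangent Mal'tsev algebra. That leaves (\ref{glts}f), which is the hidden associativity relation (\ref{hidden_assoc2}): by the second hidden associativity theorem it is equivalent to the commutation relation (\ref{hidden_assoc1}) for the Yamagutian $\Y$, and the first hidden associativity theorem establishes (\ref{hidden_assoc1}) under the hypothesis that the Sagle--Yamaguti identity holds. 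Since (\ref{sagle-yamaguti}) holds in $\Gamma$, so does (\ref{glts}f). Assembling the six checks then gives the claim.

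I expect the only genuine content to be in (\ref{glts}f): the other five identities are either definitional or were obtained earlier by short manipulations of the Jacobi and triple-closure/reductivity relations, whereas (\ref{glts}f) rests on the full hidden associativity machinery, which in turn uses the Sagle--Yamaguti (equivalently, Mal'tsev) identity in an essential way. So the step to treat with care is the chain (\ref{sagle-yamaguti}) $\Rightarrow$ (\ref{hidden_assoc1}) $\Rightarrow$ (\ref{hidden_assoc2}), i.e. making sure that the Mal'tsev identity really does force the Yamagutian to be a derivation of the ternary Yamaguti brackets; the remaining bookkeeping is just matching each GLTS axiom to the corresponding already-proved relation.
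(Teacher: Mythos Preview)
Your proposal is correct and follows exactly the paper's approach: the proof in the paper simply records that (\ref{glts}a) is evident, (\ref{glts}b) is (\ref{yam_brackets}b), (\ref{glts}c) is (\ref{glts2}), (\ref{glts}d) is (\ref{glts3}), (\ref{glts}e) is the Sagle--Yamaguti identity (\ref{sagle-yamaguti}), and (\ref{glts}f) is (\ref{hidden_assoc2}). Your additional commentary on the chain (\ref{sagle-yamaguti}) $\Rightarrow$ (\ref{hidden_assoc1}) $\Rightarrow$ (\ref{hidden_assoc2}) is more detail than the paper gives, but it is the same argument.
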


\begin{proof}
Relation (\ref{glts}a) is evident,  
(\ref{glts}b) coincides with  (\ref{yam_brackets}b), 
(\ref{glts}c) coincides with  (\ref{glts2}), 
(\ref{glts}d) coincides with  (\ref{glts3}), 
(\ref{glts}e) is the Sagle-Yamaguti identity (\ref{sagle-yamaguti}), 
(\ref{glts}f) coincides with  (\ref{hidden_assoc2}), 
\end{proof}

\begin{rem}
The general Lie triple systems turn out to be the tangent algebras of the \emph{reductive homogeneous spaces} \cite{Yam58}.
\end{rem}

\section{Hidden associativity: recapitulation}

\begin{thm}[hidden associativity]
The Yamagutian $Y$ satisfy the Jacobi identity 
\begin{equation*}
[[Y(x;y),Y(z;w)],L_v]+[[Y(z;w),L_v],Y(x;y)]+[[L_v,Y(z;w)],Y(x;y)]]=0
\end{equation*}
if  and only if (\ref{glts}f) holds.
\end{thm}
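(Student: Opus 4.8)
The plan is to prove both directions by translating the Jacobi identity for $Y$ and $L_v$ into an identity among the Yamaguti brackets, using the reductivity relation (\ref{red_L}) together with the triple closure property (\ref{triple_closure}). The strategy here closely parallels Proof II of the earlier ``hidden associativity'' theorem, so the main job is to verify that the computation is reversible and that no information is lost when passing from the loop-theoretic level $\{L_v, Y(x;y)\}$ down to the bracket level.

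\medskip

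First I would expand each of the three terms of the Jacobi expression separately. For the inner bracket $[Y(x;y),Y(z;w)]$ I would invoke the hidden associativity relation (\ref{hidden_assoc1}) to replace it by $\tfrac16\bigl(Y([x,y,z];w)+Y(z;[x,y,w])\bigr)$; however, since the present theorem is meant to be an ``if and only if'' with (\ref{glts}f) rather than a consequence of it, I would instead keep $[Y(x;y),Y(z;w)]$ unexpanded and only apply (\ref{red_L}) at the outermost level, writing
\begin{equation*}
36\bigl[[Y(x;y),Y(z;w)],L_v\bigr]
=6\,L_{[X,z,v]}\qquad\text{where }X\text{ encodes }[Y(x;y),Y(z;w)],
\end{equation*}
and then unfolding $X$ via (\ref{l+})-style manipulations; in practice the cleanest route is exactly the one of Proof II, using (\ref{red_L}) on the two mixed double brackets $[[Y(z;w),L_v],Y(x;y)]$ and $[[L_v,Y(x;y)],Y(z;w)]$ (each of which reduces to a single $L$ of a Yamaguti bracket by the argument in the proof of (\ref{red_gamma})), and handling the first term by the reductivity relation followed by (\ref{hidden_assoc1}) in the forward direction. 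Collecting the three contributions yields
\begin{equation*}
36\bigl([[Y(x;y),Y(z;w)],L_v]+[[Y(z;w),L_v],Y(x;y)]+[[L_v,Y(z;w)],Y(x;y)]\bigr)
=L_{N},
\end{equation*}
where $N=-[x,y,[z,w,v]]+[[x,y,z],w,v]+[z,[x,y,w],v]+[z,w,[x,y,v]]$ is precisely the defect in (\ref{glts}f).

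\medskip

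Then the two implications fall out of this single identity. If (\ref{glts}f) holds, then $N=0$, hence $L_N=0$, hence the Jacobi expression vanishes. Conversely, if the Jacobi identity holds, then $L_N=0$; since $v\mapsto L_v$ is injective on $\Gamma$ (the left translations of $\Gamma$ faithfully represent it, as used implicitly in the proof of (\ref{red_gamma}) where arbitrariness of $w$ was invoked), we get $N=0$, i.e.\ (\ref{glts}f). The point worth stating explicitly is that (\ref{glts}f) must be read as an identity in the three entries $x,y$ and the triple $(z,w,v)$ with $x,y,z,w,v$ all free, so that $N\equiv 0$ as a function of $v$ forces the bracket identity pointwise.

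\medskip

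The main obstacle I anticipate is bookkeeping rather than conceptual: one must be careful that in the forward direction the use of (\ref{hidden_assoc1}) is legitimate (it is, since the Sagle--Yamaguti identity holds in $\Gamma$ by (\ref{sagle-yamaguti})), and that in the reverse direction one does \emph{not} secretly use (\ref{hidden_assoc1}) --- otherwise the equivalence would be circular. The safe formulation is to derive the displayed identity $36(\text{Jacobi expression})=L_N$ using only (\ref{red_L}), (\ref{yam_brackets}), (\ref{l+}) and the reductivity relation for left translations, treating (\ref{hidden_assoc1}) and (\ref{glts}f) symmetrically as the two faces of $N=0$; then both directions are immediate. A secondary small point is the sign and index juggling in the mixed terms (note the typo-prone pattern $[[L_v,Y(z;w)],Y(x;y)]$ versus $[[L_v,Y(x;y)],Y(z;w)]$ in the statement), which one should reconcile with (\ref{yam_constraint})/(\ref{glts3}) exactly as in Proof II of the earlier theorem.
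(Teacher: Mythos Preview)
Your proposal is correct and follows essentially the same route as the paper: expand the three Jacobi summands using (\ref{red_L}) and (\ref{hidden_assoc1}) to obtain $36\cdot(\text{Jacobi expression})=L_{N}$ with $N$ the defect of (\ref{glts}f), then read off the equivalence. Your extra remarks---that (\ref{hidden_assoc1}) is available unconditionally since it rests only on the Sagle--Yamaguti identity, and that the converse requires injectivity of $v\mapsto L_v$---make explicit two points the paper leaves implicit; your observation about the index pattern in the third bracket is also well taken, since the paper's computation is consistent with $[[L_v,Y(x;y)],Y(z;w)]$ rather than what is literally printed.
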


\begin{proof}
\begin{align*}
36[[Y(x;y)\Y(z;w)],L_v]
&=6[Y(x,y,z);w]+Y(z;[x,y,w],v)\\
&=L_{[[x,y,z],w,v]+[z,[x,y,w],v]}\\
36[[Y(z;w),L_v],Y(x;y)]
&=6L_{[z,w,v],Y(x;y)}\\
&=-L_{[x,y,[z,w,v]]}\\
36[[L_v,Y(z;w)],\Y(x;y)]]
&=L_{[z,w,[x,y,v]]}
\end{align*}
Adding these formulae we obtain
\begin{align*}
[[Y(x;y),Y(z;w)],L_v]
&+[[Y(z;w),L_v],Y(x;y)]+[[L_v,Y(z;w)],Y(x;y)]]=\\
&=L_{-[x,y,[z,w,v]]+[[x,y,z],w,v]+[z,[x,y,w],v]+[z,w,[x,y,v]]}
\tag*{\qed}
\end{align*}
\renewcommand{\qed}{}
\end{proof}

Just repeating the above proof we can propose
\begin{thm}[hidden associativity]
The Yamagutians $Y$ and $\Y$ satisfy the Jacobi identities 
\begin{align*}
[[Y(x;y),Y(z;w)],Y(u;v)]+[[Y(z;w),Y(u;v)],Y(x;y)]+[[Y(u;v),Y(z;w)],Y(x;y)]]&=0\\
[[\Y(x;y),\Y(z;w)],\Y(u;v)]+[[\Y(z;w),\Y(u;v)],\Y(x;y)]+[[\Y(u;v),\Y(z;w)],\Y(x;y)]]&=0
\end{align*}
if  (\ref{glts}f) holds.
\end{thm}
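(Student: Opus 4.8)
The plan is to imitate, essentially verbatim, the two proofs already given for the ``hidden associativity'' theorems in Sections 5 and 7, since the statement asserts only that the \emph{same} Jacobi-type identity holds with the generic left translation $L_v$ replaced by a Yamagutian $Y(u;v)$ (resp.\ $\Y(u;v)$) in the third slot. The key structural input is that both $Y$ and $\Y$ are generalized representations: for $\Y$ this is exactly Theorem~\ref{red_gamma} together with the hidden associativity identity \eqref{hidden_assoc2}, and for $Y$ it is the pair \eqref{red_L} and \eqref{hidden_assoc2} (via the dictionary $6Y\leftrightarrow[L\cdot,L\cdot]+L_{[\cdot,\cdot]}$ supplied by \eqref{triple_closure}). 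In particular, the formula $6[\,Y(x;y),L_z\,]=L_{[x,y,z]}$ should be thought of as the special case of $6[\,Y(x;y),Y(z;w)\,]=Y([x,y,z];w)+Y(z;[x,y,w])$ obtained by feeding a ``trivial'' second argument; the cleanest way to run the argument is to notice that the pair $(L,\,6Y)$ and the pair $(l^+,\,6\Y)$ each close into a graded-like structure where the only nontrivial relations needed are \eqref{red_L}/\eqref{red_gamma} and \eqref{hidden_assoc1}/\eqref{hidden_assoc2}.

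Concretely, for the first displayed Jacobi identity I would expand each double bracket using \eqref{hidden_assoc1} (the $\Y$-version; the $Y$-version is identical after the substitution above):
\begin{align*}
6[[\Y(x;y),\Y(z;w)],\Y(u;v)]
&=[\Y([x,y,z];w)+\Y(z;[x,y,w]),\Y(u;v)]\\
6[[\Y(z;w),\Y(u;v)],\Y(x;y)]
&=[\Y([z,w,u];v)+\Y(z;[z,w,v]),\Y(x;y)]\\
6[[\Y(u;v),\Y(z;w)],\Y(x;y)]
&=[\Y([u,v,z];w)+\Y(u;[u,v,w]),\Y(x;y)].
\end{align*}
Then apply \eqref{hidden_assoc1} once more to each of these six terms, so that everything is expressed through Yamagutians whose arguments are Yamaguti brackets of the original generators. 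Summing the three lines and regrouping, every term should cancel in pairs \emph{because} \eqref{glts}f (the derivation property of $\Y$ with respect to the ternary bracket, equivalently \eqref{hidden_assoc2}) converts expressions like $\Y([[x,y,z],w,v]+[z,[x,y,w],v]+[z,w,[x,y,v]];\,\cdot\,)$ into $\Y([x,y,[z,w,v]];\,\cdot\,)$, producing the telescoping needed for the total to vanish. The argument for the $Y$-identity is word-for-word the same, replacing $\Y$ by $Y$ and \eqref{hidden_assoc1} by its $L$-translation consequence; this is precisely the ``just repeating the above proof'' remark preceding the statement.

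The main obstacle is purely bookkeeping: after two nested applications of \eqref{hidden_assoc1} one obtains on the order of a dozen Yamagutian terms per cyclic summand, and one must organize them so that the cancellation pattern dictated by \eqref{glts}f is visible rather than hoped for. The honest way to tame this is to introduce, for fixed $w$, the operator $D=\Y(x;y)$ acting by $D\cdot\Y(z;w):=\Y(Dz;w)+\Y(z;Dw)$ on the ``$w$-component'' — i.e.\ to treat $\Y(\cdot;w)$ as a module over the Lie algebra of inner derivations — and then recognize that the iterated bracket identity is nothing but the statement that $D$ acts as a derivation on the bracket $[\Y(\cdot;w),\Y(\cdot;w)]$, which is \eqref{glts}f itself. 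With that reformulation the three-term sum collapses immediately and no genuinely new computation beyond \eqref{hidden_assoc1} and \eqref{glts}f is required; the ``if'' direction claimed in the theorem is all that one gets, since the extra freedom in the $u,v$ arguments means the vanishing of the Jacobiator no longer forces \eqref{glts}f in reverse.
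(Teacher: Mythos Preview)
Your proposal is correct and matches the paper's own ``proof,'' which consists only of the remark ``just repeating the above proof'' preceding the statement; iterating \eqref{hidden_assoc1} on the inner brackets and then collapsing the resulting Yamagutian arguments via \eqref{hidden_assoc2} is exactly what that remark intends. Apart from a couple of slips in your displayed expansions (e.g.\ $\Y(z;[z,w,v])$ should read $\Y(u;[z,w,v])$, and similarly in the third line), the bookkeeping you outline---and especially your derivation reformulation in the last paragraph---carries through.
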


\section*{Acknowledgement}

Research was in part supported by the Estonian Science Foundation, Grant 6912.

\bigskip\noindent
Department of Mathematics\\
Tallinn University of Technology\\
Ehitajate tee  5, 19086 Tallinn, Estonia\\
E-mail: eugen.paal@ttu.ee

\end{document}